\documentclass[final,onefignum,onetabnum,letterpaper]{siamonline190516}
\usepackage[scale=0.8]{geometry} 



\usepackage{lipsum}
\usepackage{amsfonts}
\usepackage{epstopdf}
\ifpdf
  \DeclareGraphicsExtensions{.eps,.pdf,.png,.jpg}
\else
  \DeclareGraphicsExtensions{.eps}
\fi

\usepackage{datetime}
\newdateformat{monthyeardate}{%
  \monthname[\THEMONTH] \THEDAY, \THEYEAR}

\usepackage{academicons}
\usepackage{xcolor}
\renewcommand{\orcid}[1]{\href{https://orcid.org/#1}{\textcolor[HTML]{A6CE39}{orcid.org/#1}}}

\usepackage{amsmath}
\allowdisplaybreaks
\usepackage{amssymb}
\usepackage{commath}
\usepackage{mathtools}
\usepackage{bbm}

\usepackage{color}
\usepackage{graphicx}
\usepackage[small]{caption}
\usepackage{subcaption}

\usepackage{relsize}
\usepackage{adjustbox}
\usepackage{algorithm}
\usepackage[noend]{algpseudocode}
\usepackage{booktabs}
\usepackage{tikz}
\usetikzlibrary{shapes.geometric, arrows}
\tikzstyle{startstop} = [rectangle, rounded corners, minimum width=3cm, minimum height=1cm,text centered, draw=black]
\tikzstyle{process} = [rectangle, minimum width=3cm, minimum height=1cm, text centered, draw=black]

\tikzstyle{decision} = [diamond, minimum width=3cm, minimum height=1cm, text centered, draw=black]
\tikzstyle{arrow} = [thick,->,>=stealth]

\usepackage{verbatim}


\usepackage{enumitem}
\setlist[enumerate]{leftmargin=.5in}
\setlist[itemize]{leftmargin=.5in}


\newsiamthm{problem}{Problem}
\newsiamremark{remark}{Remark}
\newsiamremark{hypothesis}{Hypothesis} 
\crefname{hypothesis}{Hypothesis}{Hypotheses}
\newsiamremark{example}{Example}
\newsiamthm{claim}{Claim}
\newsiamthm{conjecture}{Conjecture}
\newsiamremark{requirement}{Requirement}

\headers{An optimization-based construction procedure for  FSBP operators}{J.\ Glaubitz, J.\ Nordstr\"om, and P.\ \"Offner}

\title{An optimization-based construction procedure for function space based summation-by-parts operators on arbitrary grids}

\author{
Jan Glaubitz\thanks{Department of Aeronautics and Astronautics \& Laboratory for Information and Decision Systems, Massachusetts Institute of Technology, Cambridge, MA 02139, USA (\email{glaubitz@mit.edu}, \orcid{0000-0002-3434-5563})}
\and 
{Jan Nordstr\"om\thanks{Department of Mathematics, Link\"oping University, 58183, Link\"oping, Sweden, and Department of Mathematics and Applied Mathematics, University of Johannesburg, Johannesburg, South Africa (\email{jan.nordstrom@liu.se}, \orcid{0000-0002-7972-6183})}}
\and 
Philipp \"Offner\thanks{Institute of Mathematics, Johannes Gutenberg University, Mainz, and Institute of Mathematics, 
TU Clausthal, Clausthal-Zellerfeld, Germany (\email{mail@philippoeffner.de}, \orcid{0000-0002-1367-1917})} 
\corresponding{Philipp \"Offner} 
}

\usepackage{amsopn}

\usepackage{soul}


\DeclareMathOperator{\diag}{diag}
 
\newcommand{\scp}[2]{\left\langle{#1, #2}\right\rangle} 
\newcommand{\Span}{\mathrm{span}}

\newcommand{\intd}{\, \mathrm{d}}

\newcommand{\R}{\mathbb{R}} 


\newsavebox{\DelimiterBox}
\newlength{\DelimiterHeight}
\newlength{\DelimiterDepth}
\newsavebox{\ArgumentBox}
\newlength{\ArgumentHeight}
\newlength{\ArgumentDepth}
\newlength{\ResizedDelimiterHeight}
\newlength{\ResizedDelimiterDepth}

\ifluatex

\else

\fi

\newcommand{\vd}{\mathrm{d}}
\newcommand{\der}[3][]{\frac{\partial^{#1} #2}{\partial^{#1} #3}}
\newcommand{\derd}[2]{\frac{\vd #1}{\vd #2}}

\newcommand{\e}{\mathrm{e}}

\newcommand{\be}{\begin{equation}}
\newcommand{\ee}{\end{equation}}
\renewcommand{\phi}{\varphi}
\renewcommand{\epsilon}{\varepsilon}




\ifpdf
\hypersetup{
  pdftitle={BFSBP operators},
  pdfauthor={Glaubitz et al.}
}
\fi




\begin{document}

\maketitle

\begin{abstract}
	We introduce a novel construction procedure for one-dimensional summation-by-parts (SBP) operators. 
Existing construction procedures for FSBP operators of the form $D = P^{-1} Q$ proceed as follows: 
Given a boundary operator $B$, the norm matrix P is first determined and then in a second step the complementary matrix Q is calculated to finally get the FSBP operator $D$. 
In contrast, the approach proposed here determines the norm and complementary matrices, $P$ and $Q$, \emph{simultaneously} by solving an optimization problem. 
The proposed construction procedure applies to classical SBP operators based on polynomial approximation and the broader class of function space SBP (FSBP) operators. 
According to our experiments, the presented approach yields a numerically stable construction procedure and FSBP operators with higher accuracy for diagonal norm difference operators at the boundaries than the traditional approach.  
Through numerical simulations, we highlight the advantages of our proposed technique.

\end{abstract}

\begin{keywords}
	Summation-by-parts operators, mimetic discretization, general function spaces, initial boundary value problems, high accuracy, stability
\end{keywords}

\begin{AMS}
	65M12, 65M60, 65M70, 65D25 
\end{AMS}

\begin{DOI}
	Not yet assigned
\end{DOI}

\section{Introduction}\label{se_intro}

SBP operators as a general tool in numerical calculations were first described in \cite{kreiss1974finite}. 
Combined with weakly imposed boundary conditions, SBP operators allow for a systematical development of energy-stable semi-discretization. 
First, developed in the context of finite difference (FD) schemes, SBP operators have also been adapted to 
 finite volume (FV), finite element schemes (FE) and time-integration methods,  cf.  \cite{zbMATH07299268,zbMATH07695226,carpenter2014entropy, fernandez2014review,gassner2013skew,zbMATH06818396,zbMATH06660164,zbMATH07625412,nordstrom2001finite,nordstrom2013summation, offner2023approximation, ranocha2016summation, MN2014SBP,zbMATH07708333} and references therein. Classically, SBP operators are designed to be exact for polynomials of a specific degree. 
However, recently, the new class of function space SBP operators (FSBP) has been introduced \cite{glaubitz2023summation, glaubitz2022energy, GKNO2023Multi, GNO2023FSBP}, which allow for general (non-polynomial) approximation spaces. 
Schemes constructed using these operators can enhance the accuracy for a fixed grid size while at the same time being provably stable \cite{MN2014SBP}. \\
As described in the proof of concept papers  \cite{GKNO2023Multi,GNO2023FSBP}, the construction procedure of FSBP operators follows the traditional steps for the classical SBP operators \cite{HFZ2016Multidimensional}. 
Suppose our objective is to derive an FSBP operator that is exact for the (potentially non-polynomial) function space $\mathcal{F}$.
The initial step in constructing such an FSBP operator involves identifying a quadrature that is exact for all functions from $(\mathcal{F}^2)'$, where $(\mathcal{F}^2)':=\{ \, (fg)'| \;f,g \in \mathcal{F} \, \}$ denotes the space of functions that can be written as the derivative of products of two functions from $\mathcal{F}$. 
This process requires a basis for $(\mathcal{F}^2)'$ and the solution of a linear system encoding the quadrature's exactness with the Vandermonde matrix of this basis as the coefficient matrix. 
The selection of a suitable basis is a complex and delicate task since it directly influences the structure of the Vandermonde matrix and the conditioning number of the linear system.
Yet another critical step is to determine the moments of the basis elements (exact integrals), as they constitute the right-hand side of the linear system.
In summary, explicitly constructing the desired quadratures presents a series of challenges that can be highly non-trivial. 
A noteworthy example where all the above challenges come together is for radial basis function (RBF) approximation spaces \cite{glaubitz2022energy}. \\
This work proposes a new construction procedure for FSBP operators to overcome the above mentioned limitation.
The new construction procedure differs from the previous one, especially concerning the quadrature. 
Usually, for FSBP operators, the quadrature is determined/selected first, and the differentiation matrix $D$ is calculated afterward in a second step. 
In the new procedure, we calculate \emph{both at the same time}.
 Searching for the quadrature and the differentiation matrix $D$ simultaneously avoids certain ill-conditioning problems.
The new construction procedure does not need an explicit description of the space  $(\mathcal{F}^2)'$  that is often hard to find. 
In particular, our new construction procedure applies to the classical polynomial SBP operators, as these form a special class of FSBP operators.  
Furthermore, the construction procedure can be applied to arbitrary grids. \\
In \Cref{sec:background}, we introduce the notation and repeat some preliminaries of the classical construction procedure. 
In \Cref{sec:construction}, we explain the revised construction procedure for these operators. 
Afterward, we show how FSBP operators are constructed by identifying them with solutions to variational problems. 
We conclude with computational examples in \Cref{sec:NT} and a summary in \Cref{se_conclustion}. 
 
\section{Background}
\label{sec:background}

All operators in this publication are constructed on the interval $I=[x_L, x_R]$, which can either be the whole domain in a pseudo-spectral approach or a reference element as in discontinuous Galerkin (DG) or multi-block finite difference (FD) methods. 
The interval is discretized using a grid of $N$ points $x_L = x_1 < x_2 < \ldots < x_N = x_R$, including the left and right boundary points.

\subsection{Diagonal-norm FSBP operators}
Following \cite{GKNO2023Multi, GNO2023FSBP} we consider diagonal-norm FSBP operators  defined via
\begin{definition}[FSBP operators]
\label{def:FSBP}
	Let $\mathcal{F} \subset C^1([x_L,x_R])$. 
	The operator $D_x = P^{-1} Q$ approximating $\partial_x$ on $[x_L,x_R]$ is a \emph{diagonal-norm $\mathcal{F}$-exact FSBP operator} if 
	\begin{enumerate}
		\item[(i)] \label{item:SBP_general1} 
		$D f(\mathbf{x}) =f'(\mathbf{x})$ for all $f \in \mathcal{F}$, 		
		\item[(ii)] \label{item:SBP_general2} 
		$P$ is a diagonal positive definite matrix, and 
		
		\item[(iii)] \label{item:SBP_general3} 
		$Q + Q^T = B = \diag(-1,0,\dots,0,1)$.		
	\end{enumerate}  
\end{definition} 
In the above \cref{def:FSBP}, property (i) ensures that all discrete derivatives in the underlying approximation space $ \mathcal{F}$ are exact. 
Condition (ii) guarantees that $P$ induces a discrete inner product and norm given by $\scp{\mathbf{u}}{\mathbf{v}}_P = \mathbf{u}^T P \mathbf{v}$ and $\|\mathbf{u}\|^2_P = \mathbf{u}^T P \mathbf{u}$ for $\mathbf{u},\mathbf{v} \in \R^N$ respectively.
Finally, Property (iii) guarantees that the SBP property 
\begin{equation} \label{eq:SBPprop}
\forall u, v \in \R^N: \quad	v^T PDu + u^T D^TP v =  u^T Bv,
\end{equation}
holds. The SBP property \eqref{eq:SBPprop} 
mimics integration by parts 
\[
	\int_{x_L}^{x_R} u \der v x \intd x + \int_{x_L}^{x_R} \der u x v \intd x = [u(x)v(x)]_{x_L}^{x_R}
\]
on the discrete level.

\subsection{Existence and construction of diagonal-norm FSBP operator}
\label{subsec_construction}

The existence of $\mathcal{F}$-exact FSBP operators was previously characterized in \cite{glaubitz2023summation} in terms of quadratures, see also \cite{GKNO2023Multi,  glaubitz2022energy, GNO2023FSBP, HFZ2016Multidimensional}.
Let $\mathcal{F} \subset C^1$ be a finite-dimensional function space with basis $\{ f_k \}_{k=1}^K$. 
Assume that the Vandermonde matrix $V = [ \mathbf{f_1}, \dots, \mathbf{f_K} ]$ has linearly independent columns. 
Then, there exists an diagonal-norm $\mathcal{F}$-exact FSBP operator $D=P^{-1}Q$ with $P=\diag(\mathbf{p})$ if and only if $\mathbf{x}$ and $\mathbf{p}$ are the points and weights of a positive and $(\mathcal{F}^2)'$-exact quadrature on $[x_L,x_R]$. 
Building upon this theoretical characterization of diagonal-norm FSBP operators, existing construction procedures first find a positive and $(\mathcal{F}^2)'$-exact quadrature on $[x_L,x_R]$, which yields the norm matrix $P$, before determining the matrix $Q$ satisfying $Q + Q^T = B$. 
Finally, the desired FSBP operator $D$ is obtained via $D = P^{-1} Q$. 
This traditional construction procedure can be summarized as follows: 

\begin{enumerate}
	\item 
	Build $P$ by setting the quadrature weights on the diagonal.

	\item 
	Split $Q$ into its known symmetric part $\frac{1}{2}B$ and unknown anti-symmetric part $S$.

	\item 
	Calculate $S$ by solving 
	\begin{equation}\label{eq_LGS}
		SV =P V'- BV/2,
	\end{equation}
	where $V' = [ \mathbf{f_1'}, \dots, \mathbf{f_K'} ]$ is the Vandermonde matrix for the derivatives of the basis elements.

	\item 
	Use $S$ in  $Q= S+\frac{1}{2} B$ to calculate $Q$. 

	\item 
	$D=P^{-1}Q$ gives the FSBP operator. 
\end{enumerate}

Note that the above algorithm relies on explicitly constructing a positive and $(\mathcal{F}^2)'$-exact quadrature, which can be challenging for the following reasons:

\begin{itemize}
	\item 
	A $(\mathcal{F}^2)'$-exact quadrature rule is needed.
	However,  to construct such a high-order quadrature rule,  a suitable explicit basis and the corresponding moments of $(\mathcal{F}^2)'$ are required.
	Due to the representation of basis functions and their derivates, the system  \eqref{eq_LGS} can become ill-conditioned. 
	This occurs, for example, when the 	Vandermonde matrix $V'$ is nearly linearly dependent, as discussed in   \cite{glaubitz2022energy}. 

	\item 
	There might not be a minimal Gaussian quadrature for a given function space $\mathcal{F}$. 
	While it was demonstrated in \cite{ zbMATH07236289, zbMATH07726044} that one can always find a positive least-square quadrature that is exact for $(\mathcal{F}^2)'$, these least-square quadratures typically require a large number of points, reducing the efficiency of the resulting FSBP operator. 
\end{itemize} 

\begin{remark}[Limitation in polynomial SBP theory]
	For the classical SBP operators, these problems do not appear in one space dimension since adequate Gaussian-based quadrature rules are available. 
	However, similar problems arise for multi-dimensional SBP operators due to the lack of multi-dimensional quadrature rules on arbitrary domains, see \cite{worku2023quadrature}, which focuses on triangles and tetrahedra elements. 
\end{remark}

\subsection{A note on the norm matrix $P$}
\label{subsec_norm}

The only restriction \cref{def:FSBP} poses on the function space $\mathcal{F}$ is that it contains functions that are continuously differentiable on the interval $[x_L,x_R]$ on which the   FSBP operator $D = P^{-1} Q$ approximates the first-derivate operator $\partial_x$. 
This fundamental requirement having $C^1$ functions ensures that the point values of  $f \in \mathcal{F}$ and its derivative $f'$ on the grid points are well defined. 
However, the application context may necessitate additional constraints on $\mathcal{F}$ to maintain essential properties of the FSBP semi-discretization. 
In \cite{glaubitz2021stabilizing,GNO2023FSBP,glaubitz2022energy,GKNO2023Multi}, it has been demonstrated that $\mathcal{F}$ should contain constants---so that the FSBP operator $D = P^{-1} Q$ is exact for them---to ensure conservation in the context of hyperbolic conservation laws. In other words, the FSBP operators should be  null space consistent \cite{MN2014SBP}.
Here, we identify another critical condition: 
\begin{requirement}\label{req:exact_constants}
The associated norm operator $P$ should integrate constants on $[x_L,x_R]$ exactly. 
This is expressed as $\mathbf{1}^T P \mathbf{1} = x_R - x_L$, which is equivalent to 
\begin{equation} \label{eq_constant}
	\sum_{n=1}^N p_n = x_R - x_L
\end{equation}
for a diagonal norm matrix $P = \diag(p_1,\dots,p_N)$. 
\end{requirement}
 Henceforth, we always assume that \cref{req:exact_constants} holds when referring to FSBP operators in \cref{def:FSBP}. 
The exactness of $P$ for constants is preferred  for the following reasons:

\begin{enumerate}
	\item 
	In our numerical simulations in \Cref{sec:NT}, we found  slightly more accurate solutions by imposing \eqref{eq_constant}. 
Moreover, by using  \eqref{eq_constant}, we observed fewer challenges in the optimization procedure.  
	
	\item 
	If $P = \diag(p_1,\dots,p_N)$ is exact for constants, then $p_n > 0$ for $n=1,\dots,N$ is equivalent to the associated quadrature $I_N[u] = \sum_{n=1}^N p_n u(x_n)$ being stable in the sense that $| I_N[u] | \leq (x_R - x_L) ( \max_{x \in [x_L,x_R]} |u(x)| )$. 
	This stability is crucial for minimizing the amplification of input noise, such as round-off errors. 
	For an in-depth discussion on quadrature stability, we refer to \cite{glaubitz2021stable,zbMATH07726044} and the references therein. 
	Notably, the equivalence between the positivity of the $p_n$'s and the stability of the quadrature induced by the norm matrix $P$ is a fundamental assumption in the least-squares method \cite{glaubitz2021stable,zbMATH07726044}, previously employed in \cite{GNO2023FSBP,glaubitz2022energy,GKNO2023Multi,glaubitz2023summation} for constructing $P$.
\end{enumerate}
Beside these practical points related to the construction procedure, additional arguments for incorporating \cref{eq_constant} in the context of nonlinear hyperbolic conservation are the following: 

\begin{enumerate}
	\item[3.]
	Numerical schemes for hyperbolic conservation laws should fulfill a Lax--Wendroff-type theorem \cite{zbMATH03245346,zbMATH06904819,zbMATH00989731, zbMATH01890982,abgrall2023personal}. 
	Such a theorem asserts that if a sequence of approximate solutions, derived from a conservative and consistent numerical scheme, converges uniformly almost everywhere as the mesh parameter tends to zero, then the limit constitutes a weak solution to the hyperbolic system. 
	The proof requires  exact integration of constants \cite{offner2023approximation}. 

	\item[4.]
	Nonlinear hyperbolic equations exhibit the emergence of shocks after a finite period, wherein conventional smooth solutions break down, giving rise to discontinuities. 
	The challenge for numerical methods in this context lies in effectively handling the developed shocks while still being robust. Many  techniques  in the literature  \cite{glaubitz2018application,zbMATH07024307, ranocha2018stability, zbMATH01330702,offner2013detecting, glaubitz2020shock,klein2023stabilizingII,hillebrand2023applications,zbMATH07745801}  incorporate  cell averages in the baseline scheme.
	To ensure accurate solutions, including \eqref{eq_constant} is favorable.

\end{enumerate}

\begin{remark}
	All FSBP operators developed in previous works \cite{glaubitz2022energy, GNO2023FSBP, GKNO2023Multi}  had a norm operator $P$ that was exact for constants. 
	Later in \Cref{sec:NT}, we present and discuss one example which do not.
\end{remark}
\section{The new construction procedure of FSBP operators} 
\label{sec:construction}
	
In this section, we describe the new construction procedure for FSBP operators. 
Instead of using the algorithm described in Subsection \ref{subsec_construction} where we first select $P$ and subsequently
 determine $Q$, we simultaneously calculate both.

\subsection{The optimization problem}

We start by providing a reformulation of \cref{def:FSBP} that will allow us to interpret FSBP operators as solutions to specific optimization problems.

\begin{lemma}\label{lem:FSBP_reform} 
	Let $\mathcal{F} \subset C^1([x_L,x_R])$ and consider the operator $D = P^{-1} Q \in \R^{N \times N}$. 
	Furthermore, let $X = [S, P] \in \R^{N \times 2N}$, where $S = (Q - Q^T)/2 \in \R^{N \times N}$ is the anti-symmetric part of $Q$. 
	Moreover, let $W = [V, -V_x]^T \in \R^{2N \times N}$, where $V = [ \mathbf{f_1}, \dots, \mathbf{f_K} ]$ and $V_x = [ \mathbf{f_1'}, \dots, \mathbf{f_K'} ]$ are the Vandermonde matrix for an arbitrary basis $\{ f_k \}_{k=1}^K$ of $\mathcal{F}$ and the derivatives of the same basis. 
	Then, $D = P^{-1} Q$ is an $\mathcal{F}$-exact diagonal-norm FSBP operator (see \cref{def:FSBP}) if and only if the conditions   
	\begin{enumerate}
		\item[(a)]  $XW = -BV/2$,
		\item[(b)] $P$ is a diagonal positive definite matrix, and 
		\item[(c)] $Q = B/2 + S$   
	\end{enumerate} 
	holds.  
\end{lemma}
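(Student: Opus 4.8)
The plan is to match the three defining conditions (i)--(iii) of \cref{def:FSBP} with the three conditions (a)--(c) one at a time, in both directions. The starting observation is that $D = P^{-1}Q$ already presupposes $P$ invertible, so property (ii) and condition (b) are literally the same statement, and it only remains to connect (i) with (a) and (iii) with (c). For the former, I would first note that (i) is equivalent to the single matrix identity $DV = V_x$: writing $\mathbf{f_k} = f_k(\mathbf{x})$ and $\mathbf{f_k'} = f_k'(\mathbf{x})$, any $f = \sum_{k=1}^K c_k f_k \in \mathcal{F}$ satisfies $f(\mathbf{x}) = V\mathbf{c}$ and, by linearity of differentiation, $f'(\mathbf{x}) = V_x\mathbf{c}$, so requiring $Df(\mathbf{x}) = f'(\mathbf{x})$ for all $f \in \mathcal{F}$ is the same as $DV\mathbf{c} = V_x\mathbf{c}$ for all $\mathbf{c} \in \R^K$, i.e. $DV = V_x$.

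Next I would treat (iii) versus (c). Since $S := (Q - Q^T)/2$ is by construction the anti-symmetric part of $Q$ while $B$ is symmetric, the split $Q = \tfrac12(Q + Q^T) + \tfrac12(Q - Q^T) = \tfrac12(Q + Q^T) + S$ shows that $Q = B/2 + S$ holds precisely when $\tfrac12(Q + Q^T) = B/2$, i.e. precisely when $Q + Q^T = B$; hence (c) $\Leftrightarrow$ (iii). Assuming now (b) and (c) (equivalently (ii) and (iii)), I would finish by showing (i) $\Leftrightarrow$ (a): multiplying $DV = V_x$ on the left by the invertible $P$ gives the equivalent statement $QV = PV_x$, and substituting $Q = B/2 + S$ turns this into $BV/2 + SV = PV_x$, that is $SV - PV_x = -BV/2$. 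Writing out the block product with $X = [S, P]$ and $W$ as in the statement yields $XW = SV - PV_x$, so this last identity is exactly (a). Chaining the three equivalences in each direction then proves the claim.

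I do not anticipate a genuine obstacle; the argument is bookkeeping. The one step that deserves a sentence of care is that condition (c) is not automatically satisfied by every $Q$ — it is a real constraint, equivalent to $Q + Q^T = B$, because $B/2$ need not coincide with the symmetric part of an arbitrary matrix $Q$. A second small point is that the reduction of (i) to $DV = V_x$ relies on $\mathcal{F}$ being spanned by the $f_k$ and on differentiation being linear, which is exactly where the hypotheses $\mathcal{F} \subset C^1([x_L,x_R])$ and the choice of a basis of $\mathcal{F}$ enter.
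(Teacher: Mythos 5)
Your proposal is correct and follows essentially the same route as the paper's proof: identify (b) with (ii), reduce (c) to $Q+Q^T=B$ via the symmetric/anti-symmetric split, and convert (i) into $DV=V_x$, then $QV=PV_x$, then $SV-PV_x=-BV/2$, i.e.\ $XW=-BV/2$. Your explicit remarks that the last equivalence uses (c) and that the reduction to $DV=V_x$ rests on linearity over the basis are slightly more careful than the paper, but the argument is the same.
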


\begin{proof} 
	By definition, $D = P^{-1} Q \in \R^{N \times N}$ is an $\mathcal{F}$-exact diagonal-norm FSBP operator if and only if (1) to (3) in \cref{def:FSBP} are satisfied. 
	Note that (2) in \cref{def:FSBP} is the same as (b) above. 
	Furthermore, substituting $S = (Q - Q^T)/2$ into (c) above and multiplying both sides of the equation by two, we see that (c) is equivalent to 
	\begin{equation}
		2Q = B + Q - Q^T,
	\end{equation} 
	which, in turn, is equivalent to (3) in \cref{def:FSBP}. 
	It remains to show that (1) in \cref{def:FSBP} is equivalent to (a) above. 
	To this end, we note that (1) in \cref{def:FSBP} is equivalent to 
	\begin{equation}\label{eq:FSBP_reform_proof1}
		D V = V_x. 
	\end{equation}
	Substituting $D = P^{-1} Q$ and multiplying both sides of \cref{eq:FSBP_reform_proof1} by $P$ from the left, demonstrates that (1) in \cref{def:FSBP} is equivalent to
	\begin{equation}\label{eq:FSBP_reform_proof2}
		Q V = P V_x. 
	\end{equation}
	We next split $Q$ into its symmetric and anti-symmetric part, rewriting it as $Q = (Q+Q^T)/2 + (Q-Q^T)/2$, where the symmetric and anti-symmetric part is given by $(Q+Q^T)/2 = B/2$ and $(Q-Q^T)/2 = S$, respectively. 
	Substituting the resulting representation $Q = B/2 + S$ into \cref{eq:FSBP_reform_proof2} yields that (1) in \cref{def:FSBP} is equivalent to 
	\begin{equation}\label{eq:FSBP_reform_proof3}
		B V / 2 + S V = P V_x. 
	\end{equation}
	Finally, \cref{eq:FSBP_reform_proof3} can be reformulated as 
	\begin{equation}\label{eq:FSBP_reform_proof4}
		\underbrace{\begin{bmatrix} S & P \end{bmatrix}}_{= X}
		\underbrace{\begin{bmatrix} V \\ -V_x \end{bmatrix}}_{= W}
		= - B V / 2,
	\end{equation}
	which shows that (1) in \cref{def:FSBP} is equivalent to (a) above.
\end{proof}

Building upon \cref{lem:FSBP_reform}, we can now identify FSBP operators as solutions to specific optimization problems. 
To this end, recall that an $\mathcal{F}$-exact diagonal-norm FSBP operator can be written as $D = P^{-1} ( S + B/2 )$, where $P$ is a diagonal positive definite matrix and $S$ is anti-symmetric. 
Furthermore, observe that (a) in \cref{lem:FSBP_reform} is equivalent to $\| X W + BV / 2 \|_2^2 = 0$. 
Hence, if $D = P^{-1} ( S + B/2 )$ is an $\mathcal{F}$-exact diagonal-norm FSBP operator then $X = [ S, P ]$ is the global minimizer of the
 constrained  quadratic minimization problem 
\begin{equation}\label{eq:opt_problem_constraint}
	\min_{X \in \mathcal{X}}  \norm{XW + BV / 2}_2^2.
\end{equation}
Here, the set of admissible solutions, or constraints, are
\begin{equation}\label{eq:admissible_solutions}
	\mathcal{X} = \left\{ \, X = [S, P] \mid S^T = -S, P = \diag(p_1,\dots,p_N), \; p_i>0,  \sum_{n=1}^N p_n = x_R - x_L \, \right\}, 
\end{equation}
ensuring that $S$ is anti-symmetric, $P$ is diagonal positive definite, and $P$ is exact for constants. 
The first two constraints correspond to (b) and (c) in \cref{lem:FSBP_reform}, while the last one ensures that \cref{req:exact_constants}
holds. 
Conversely, if $X = [S, P]$ is a solution of \cref{eq:opt_problem_constraint} with $\norm{XW + BV / 2}_2^2 = 0$, then $D = P^{-1} ( S + B/2 )$ is an $\mathcal{F}$-exact diagonal-norm FSBP operator.
We summarize this characterization of FSBP operators as solutions of the  constrained quadratic optimization problem \cref{eq:opt_problem_constraint} below in \cref{lem:FSBP_opt}.

\begin{lemma}\label{lem:FSBP_opt} 
	Let $\mathcal{F} \subset C^1([x_L,x_R])$. 
	The operator $D = P^{-1} ( S + B/2 ) \in \R^{N \times N}$ is an $\mathcal{F}$-exact diagonal-norm FSBP operator (see \cref{def:FSBP}) if and only if $X = [S, P] \in \R^{N \times 2N}$ solves the constraint quadratic minimization problem \cref{eq:opt_problem_constraint} that  satisfies $\norm{XW + BV / 2}_2^2 = 0$.
\end{lemma}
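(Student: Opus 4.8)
The plan is to derive \cref{lem:FSBP_opt} directly from \cref{lem:FSBP_reform} by recasting the equality constraint (a) as the vanishing of a nonnegative objective, so that almost all of the work has already been done. The essential observation is that for any matrix $M$, one has $M = 0$ if and only if $\norm{M}_2^2 = 0$; applying this with $M = XW + BV/2$ converts condition (a) of \cref{lem:FSBP_reform} into the statement $\norm{XW + BV/2}_2^2 = 0$.

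First I would fix an arbitrary basis $\{f_k\}_{k=1}^K$ of $\mathcal{F}$ and adopt the notation of \cref{lem:FSBP_reform}, writing $X = [S,P]$ and $W = [V,-V_x]^T$. I would then split the proof into the two implications. For the forward direction, assume $D = P^{-1}(S+B/2)$ is an $\mathcal{F}$-exact diagonal-norm FSBP operator satisfying \cref{req:exact_constants}. By \cref{lem:FSBP_reform}, conditions (a), (b), (c) hold; condition (a) gives $XW + BV/2 = 0$, hence $\norm{XW+BV/2}_2^2 = 0$, so $X$ attains the value $0$ for the objective in \cref{eq:opt_problem_constraint}. Conditions (b) and (c) together with \cref{req:exact_constants} place $X$ in the admissible set $\mathcal{X}$ of \cref{eq:admissible_solutions}: antisymmetry of $S$ and positive-definite diagonality of $P$ come from (b)/(c), while $\sum_n p_n = x_R - x_L$ is exactly \cref{req:exact_constants}. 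Since the objective is a sum of squares and therefore bounded below by $0$, and $X \in \mathcal{X}$ achieves $0$, $X$ is a global minimizer of \cref{eq:opt_problem_constraint} with optimal value $0$, i.e. it solves the problem with $\norm{XW+BV/2}_2^2 = 0$.

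For the converse, suppose $X = [S,P]$ solves \cref{eq:opt_problem_constraint} and satisfies $\norm{XW+BV/2}_2^2 = 0$. Membership of $X$ in $\mathcal{X}$ yields that $S$ is antisymmetric and $P$ is positive-definite diagonal, which are conditions (b) and (c) of \cref{lem:FSBP_reform} (recalling $Q := B/2 + S$), and the vanishing norm gives $XW + BV/2 = 0$, i.e. condition (a). By \cref{lem:FSBP_reform}, $D = P^{-1}Q = P^{-1}(S+B/2)$ is then an $\mathcal{F}$-exact diagonal-norm FSBP operator; the extra constraint $\sum_n p_n = x_R - x_L$ inherited from $\mathcal{X}$ ensures \cref{req:exact_constants} holds, consistent with the standing assumption.

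I do not expect a genuine obstacle here, since \cref{lem:FSBP_reform} carries the analytic content and the rest is bookkeeping. The only point requiring a little care is to be explicit that the three defining constraints of $\mathcal{X}$ correspond precisely to (b), (c) of \cref{lem:FSBP_reform} plus \cref{req:exact_constants}, and to note that a global minimizer of a nonnegative objective that happens to attain the value $0$ is characterized by the objective being $0$ — so "solves \cref{eq:opt_problem_constraint} with $\norm{XW+BV/2}_2^2=0$" is the right notion of solution to invoke. A secondary subtlety worth a sentence is the implicit hypothesis that $V$ has linearly independent columns, which guarantees $P^{-1}$ and the identification of $D$ are well posed; this is already assumed throughout the surrounding discussion.
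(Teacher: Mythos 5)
Your proposal is correct and follows essentially the same route as the paper, which establishes this lemma in the paragraph preceding its statement: condition (a) of \cref{lem:FSBP_reform} is recast as $\norm{XW+BV/2}_2^2=0$, and membership in $\mathcal{X}$ encodes (b), (c) together with \cref{req:exact_constants}. Your added remarks on the nonnegativity of the objective and the standing assumption that $V$ has linearly independent columns are consistent with, and slightly more explicit than, the paper's treatment.
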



\subsection{Removing the constraints}

The efficiency of the optimization process is increased by eliminating the inequality constraints. 
Including these constraints requires using approaches such as active sets or projection methods. 
Such methods are either limited to first-order convergence, resulting in impractically many iterations to achieve near-machine precision,
 or an significant increase in complexity for large-scale optimization problems.

We therefore reformulate the optimization problem \cref{eq:opt_problem_constraint} subject to $\mathcal{X}$ via a parametrization 
$(\boldsymbol{\sigma},\boldsymbol{\rho}) \mapsto X(\boldsymbol{\sigma},\boldsymbol{\rho})$ with $X(\boldsymbol{\sigma},\boldsymbol{\rho}) = [ S(\boldsymbol{\sigma}), P(\boldsymbol{\rho}) ]$ that maps an unconstrained vector space to the set $\mathcal{X}$. 
In particular, $S(\boldsymbol{\sigma})$ parameterizes the set of all anti-symmetric $N \times N$ matrices. 
At the same time, $P(\boldsymbol{\rho})$ parameterizes the set of all diagonal norm operators on $[x_L,x_R]$ that are exact for constants, i.e., all diagonal $N \times N$ matrices with positive diagonal entries that sum up to $x_R-x_L$. 
In our numerical tests, we used the parameterization 
\begin{equation}
	S(\boldsymbol{\sigma}) = 
	\begin{pmatrix} 
		0 & \sigma_1 & \sigma_2 & \sigma_3 & \dots \\
		-\sigma_1 & 0 & \sigma_4 & \sigma_5 &\dots \\
		-\sigma_2 & -\sigma_4 & 0 & \sigma_6 &\dots \\
		-\sigma_3 & -\sigma_5 & -\sigma_6 & 0 &\dots \\
		\vdots & \vdots & \vdots & \vdots& \ddots 
	\end{pmatrix}
\end{equation}
for the set of all anti-symmetric $N \times N$ matrices, where $\boldsymbol{\sigma} = [\sigma_1,\dots,\sigma_{L}]$ with $L = N(N-1)/2$. 
Furthermore, we parameterized the set of diagonal norm operators on $[x_L,x_R]$ that are exact for constants as 
\begin{equation}\label{eqparamP}
	P(\boldsymbol{\rho}) = \left( \frac{x_R - x_L}{ \sum_{n=1}^N \operatorname{sig}(\rho_n) } \right) \diag\left( \operatorname{sig}(\rho_1), \dots,  \operatorname{sig}(\rho_N) \right),
\end{equation}
where $\boldsymbol{\rho} = [\rho_1,\dots,\rho_N]$ and $\operatorname{sig}:\R \to (0,1)$ is a sigmoid function, having a characteristic ``S"-shaped curve, cf. \cref{fig:sigmoid},  and only taking on values between $0$ and $1$. 
Using any sigmoid function ensures the positivity of the resulting norm operator. 
Moreover, the factor ``$(x_R-x_L)/( \sum_{n=1}^N \operatorname{sig}(\rho_n) )$" ensures that the diagonal elements of $P$ sum up to $x_R-x_L$, ensuring its exactness for constants. 
While any sigmoid function ensures that $P$ is a norm matrix that is exact for constants, we found the logistic function 
\begin{equation*}
	\operatorname{sig}(\rho) 
		= \frac{1}{1+\e^{-\rho}}
\end{equation*}
to perform well in our numerical tests  (blue dotted line in \cref{fig:sigmoid}). 

\begin{figure}
\begin{center}
		\includegraphics[width=0.65\textwidth]{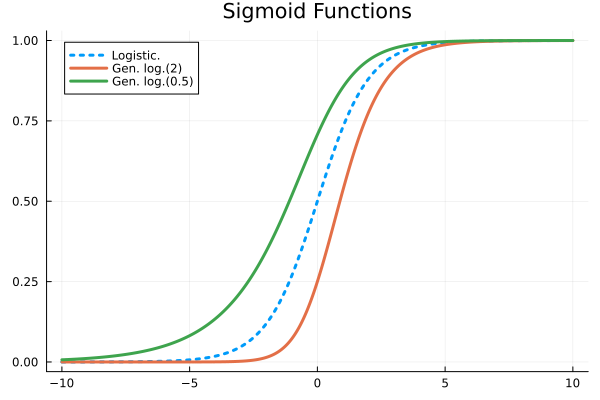}
	\caption{Different sigmoid functions with the characteristic `S-shape' plotted between $[-10,10]$.} 
	\label{fig:sigmoid}
	\end{center}
\end{figure}

Finally, we reformulate our constrained optimization problem \eqref{eq:opt_problem_constraint} as the following unconstrained optimization problem
\begin{equation}\label{eq:opt_problem_unconstraint}
	\min_{\sigma \in \R^N, \rho \in \R^L } \norm{X(\sigma, \rho) W + BV / 2}^2.
\end{equation}
			
\begin{remark}
	If we only required $P$ to be a norm operator---not necessarily exact for constants. 
	We would  have parameterized it as $P(\boldsymbol{\rho}) = \diag\left( \operatorname{sig}(\rho_1), \dots,  \operatorname{sig}(\rho_N) \right)$,  dropping the normalizing factor. \\
\end{remark}

\subsection{Conditioning}

For a stable solution to \eqref{eq:opt_problem_unconstraint}, it is essential with a well-conditioned matrix $W$. The condition number of $W$ is determined by the ratio of its largest singular value to the smallest one. These singular values, in turn, correspond to the square roots of the largest and smallest eigenvalues of the matrix $W^T W$, respectively.
Note that 
\begin{equation}\label{eq:WTW}
	W^T W 
		= [ V^T\; - V_x^T ] \begin{bmatrix} V \\ - V_x \end{bmatrix} 
		= V^T V + V_x^T V_x
\end{equation} 
and recall that $V = [\mathbf{f_1},\dots,\mathbf{f_K}]$ and $V_x = [\mathbf{f_1'},\dots,\mathbf{f_K'}]$ are the Vandermonde matrices for the basis elements $f_1,\dots,f_K$ and their derivatives. 
Hence, the right-hand side of \cref{eq:WTW} can be interpreted as the Gram matrix of this basis \cite{Lax2007LA}, 
\begin{equation}\label{eq:Gram}
	V^T V + V_x^T V_x = 
	\begin{bmatrix} 
		\langle f_1, f_1 \rangle_{H^1} & \dots & \langle f_1, f_K \rangle_{H^1} \\ 
		\vdots & & \vdots \\ 
		\langle f_K, f_1 \rangle_{H^1} & \dots & \langle f_K, f_K \rangle_{H^1} 
	\end{bmatrix},
\end{equation} 
w.r.t.\ to the discrete Sobolev  inner product 
\begin{equation}\label{eq:Soboloev_IP}
	\langle f, g \rangle_{H^1} 
		= \sum_{n=1}^N f(x_n) g(x_n) + f'(x_n) g'(x_n).
\end{equation} 
In principle, we can use any basis $\{ f_k \}_{k=1}^K$ of $\mathcal{F}$ to formulate $V$ and $V_x$. 
However, combining \cref{eq:WTW,eq:Gram}, we see that we can minimize the condition number of $W$ by choosing $\{ f_k \}_{k=1}^K$ to be a basis that is orthogonal w.r.t.\ the discrete Sobolev inner product \cref{eq:Soboloev_IP}. 
In this case, $\langle f_k, f_l \rangle_{H^1} = \delta_{kl}$ for all $k,l=1,\dots,L$, and $W^T W = I$, which yields the minimal condition number of one. 
Finally, we remark that a basis orthogonal w.r.t.\ the discrete Sobolev inner product \cref{eq:Soboloev_IP} can always be constructed using, for instance, the (modified) Gram--Schmidt procedure \cite{gautschi2004orthogonal}.

\begin{remark}[Solving the optimization problem]
	To solve the optimization problem \eqref{eq:opt_problem_unconstraint}, we apply the optimization package Optim.jl from the Julia language and use the limited memory LBFGS method  \cite{zbMATH01742537, LN1989LBFGS}. 
 	The LBFGS method serves as an approximation to the general Newton method. 
 	Instead of relying on the exact Hessian matrix, it employs an approximation derived from differences in the gradient across iterations. 
 	When the initial matrix is positive definite, it can be demonstrated that subsequent matrices will also maintain positive definiteness. 
	Notably, LBFGS does not explicitly construct the approximate Hessian  matrix; instead, it computes the direction directly. 
	This characteristic renders it more suitable for  large-scale problems, as the memory requirements would otherwise grow rapidly.
\end{remark}
 
\section{Numerical Tests} 
\label{sec:NT}

We now investigate the performance of the proposed optimization-based construction procedure. 
We divided the experiments into two parts. Initially, we concentrate on a straightforward example, focusing on the linear advection equation for simplicity. We compare various settings of polynomial and FSBP operators employed in numerical schemes. Moving to the second example, we tackle a more advanced test case: solving the Schrödinger equation using the FSBP framework. The crucial aspect of this investigation lies in the new construction procedure, enabling us to build FSBP operators for the considered approximation space, which wouldn't have been feasible with the old approach\footnote{The Julia code used to generate the numerical tests presented here is open access and can be found on GitHub (\url{https://github.com/phioeffn/SBP-Construction})}.

\subsection{Linear Advection Equation}
We consider the linear advection equation 
\begin{equation}\label{stuff}
\begin{aligned}
	\der u t + \der u x &= 0, \quad x \in (-1,1), \ t>0,
	\end{aligned}
\end{equation}
with periodic boundary conditions and a smooth initial condition $u(x,0)=  \sin(\pi x)$. 
\subsubsection{Accuracy analysis}
As a first test to demonstrate the exactness of our new construction procedure.
We construct polynomial SBP operators that are exact for the polynomial approximation space 
\begin{equation}
	\mathcal{F} = \left \{ 1, x, x^2, x^3, x^4, x^5, x^6, x^7 \right \}. 
\end{equation}
using an equidistant grid. 
Our aim is to test if the new operator construction procedure can produce an operator with high accuracy. 
The resulting convergence rates can be seen in  \cref{fig:CA} at time $t=10$. 
The constructed operators converge with a 8th order of accuracy. 
This was unclear a priori, as we did not enforce any accuracy of the volume quadrature apart from the fact that constants are integrated exactly. 
Note also that this result is better than the order  of 
classical SBP operators for finite difference schemes. Traditional SBP operators that utilize a diagonal $P$ suffer from reduced order of accuracy near boundaries \cite{kreiss1974finite, strand1994summation, linderson2018}. If a central difference stencil of order $2p$ is used in the interior, the boundary stencil is limited to order $p$. This typically leads to a convergence rate of order $p+1$ when 
SBP-SAT is used to discretize first-order energy stable hyperbolic IBVPs \cite{svaredon2019}. One example where this order barrier was seemingly broken was given in the recent stencil-adaptive construction shown in  \cite{linders2024superconvergent}, but at a very high computational cost and significant construction complexity.  

\begin{figure}
\begin{center}
	\begin{subfigure}{0.5\textwidth}
		\includegraphics[width=\textwidth]{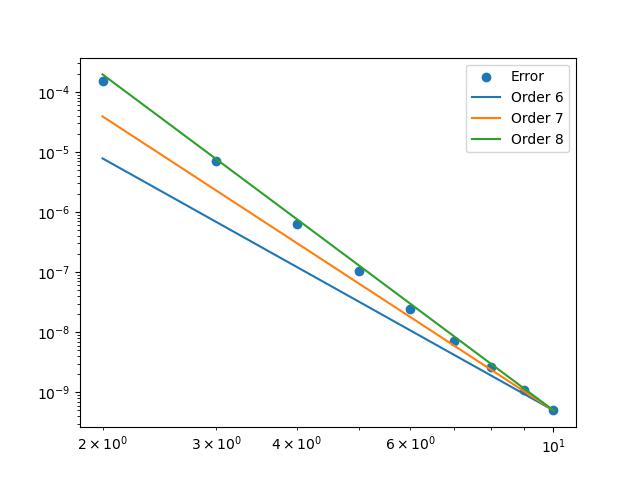}
		\caption{Convergence Analysis}
	\end{subfigure}
	\caption{Convergence Analysis for our new operators. The errors of the solution converge with eight order in the energy norm induced by the quadrature in $P$.}
	\label{fig:CA}
	\end{center}
\end{figure}

\subsubsection{Comparison to  classical polynomial SBP operators}\label{subsec_polynomial}

To compare our new operator construction procedure, denoted by FSBP in the following, to known operators, we constructed an operator for $9$ nodes on the interval $[-1.0, 1.0]$, which was exact for polynomials of order up to $4$. 
The resulting operator is given by the norm matrix 
\[
	P= \diag{(0.0788, 0.3432, 0.1873,  0.2349, 0.3117, 0.2349, 0.1873, 0.3432, 0.0788 )}
\]
and the $Q$ matrix
\begin{equation}
\resizebox{.925\textwidth}{!}{$\displaystyle 
Q=\left[
\begin{array}{ccccccccc}
-0.5 & 0.6136 & 0.005545 & -0.09575 & -0.07992 & 0.02095 & 0.04167 & 0.008644 & -0.01473 \\
-0.6136 & 0 & 0.3198 & 0.3079 & 0.1251 & -0.09406 & -0.08702 & 0.03314 & 0.008644 \\
-0.005545 & -0.3198 & 0 & 0.142 & 0.1949 & 0.06355 & -0.02978 & -0.08702 & 0.04167 \\
0.09575 & -0.3079 & -0.142 & 0 & 0.178 & 0.1858 & 0.06355 & -0.09406 & 0.02095 \\
0.07992 & -0.1251 & -0.1949 & -0.178 & 0 & 0.178 & 0.1949 & 0.1251 & -0.07992 \\
-0.02095 & 0.09406 & -0.06355 & -0.1858 & -0.178 & 0 & 0.142 & 0.3079 & -0.09575 \\
-0.04167 & 0.08702 & 0.02978 & -0.06355 & -0.1949 & -0.142 & 0 & 0.3198 & 0.005545 \\
-0.008644 & -0.03314 & 0.08702 & 0.09406 & -0.1251 & -0.3079 & -0.3198 & 0 & 0.6136 \\
0.01473 & -0.008644 & -0.04167 & -0.02095 & 0.07992 & 0.09575 & -0.005545 & -0.6136 & 0.5 \\
\end{array}
\right].
$}
\end{equation} 
In \cite{MN2014SBP}, a classical SBP operator with the same order of accuracy and grid is given by the norm matrix 
\[
	P= \diag{(0.08854, 0.3073,  0.224, 0.2552, 0.25, 0.2552, 0.224, 0.3073, 0.08854)}
\]
and the $Q$ matrix 
\begin{equation}
\resizebox{.925\textwidth}{!}{$\displaystyle 
Q=\left[
\begin{array}{ccccccccc}
-0.5 & 0.6146 & -0.08333 & -0.03125 & 0 & 0 & 0 & 0 & 0 \\
-0.6146 & 0 & 0.6146 & 0 & 0 & 0 & 0 & 0 & 0 \\
0.08333 & -0.6146 & 0 & 0.6146 & -0.08333 & 0 & 0 & 0 & 0 \\
0.03125 & 0 & -0.6146 & 0 & 0.6667 & -0.08333 & 0 & 0 & 0 \\
0 & 0 & 0.08333 & -0.6667 & 0 & 0.6667 & -0.08333 & 0 & 0 \\
0 & 0 & 0 & 0.08333 & -0.6667 & 0 & 0.6146 & 0 & -0.03125 \\
0 & 0 & 0 & 0 & 0.08333 & -0.6146 & 0 & 0.6146 & -0.08333 \\
0 & 0 & 0 & 0 & 0 & 0 & -0.6146 & 0 & 0.6146 \\
0 & 0 & 0 & 0 & 0 & 0.03125 & 0.08333 & -0.6146 & 0.5 \\
\end{array}
\right].
$}
\end{equation} 
The operator constructed by our new method does not have the classical banded structure, i.e., it is not a combination of two boundary blocks in the left upper and right lower corner connected by a diagonal with repeated structure, but it is dense. 
This enlarged stencil is put to good use though, as our operator has an order of accuracy of $4$ in the entire domain. In contrast, the classical operator has a reduced accuracy of $2$ for the boundary nodes. 
One may ask whether this gain in accuracy is bought by an increased operator norm of the differentiation matrix, i.e., a lower CFL restriction. 
To this end, we calculated both operators' spectral norms, which are $10.84$ for the operator resulting from our proposed optimization-based construction procedure and $9.44$ for the classical FD-SBP operator. 
The difference is negligible in comparison to the gains in order of accuracy.

\subsubsection{Investigating the exactness of the norm matrix}

So far, we have consistently incorporated the requirement \cref{eq_constant} in our optimization process. 
In theory, ignoring rounding errors from finite arithmetic, the norm matrix of polynomial SBP operators should be exact for constants, even without specifying this requirement. 
However, our computational experiments typically yield different SBP operators with and without the normalization requirement \cref{eqparamP}. 
This difference is highlighted in \Cref{fig:errors_constants}, where we analyze the error behaviors for the advection test case \cref{stuff} using the polynomial function space $\mathcal{P}_3=\{1,x,x^2,x^3\}$ and $N=10$ equidistant points. 
The dotted line shows the error behavior for SBP operators constructed without the additional requirement, whereas the solid line represents the operators constructed with \cref{eqparamP}. 
Our tests show very little difference in accuracy.

\begin{figure}
	\centering
	\begin{subfigure}{0.5\textwidth}
		\includegraphics[width=\textwidth]{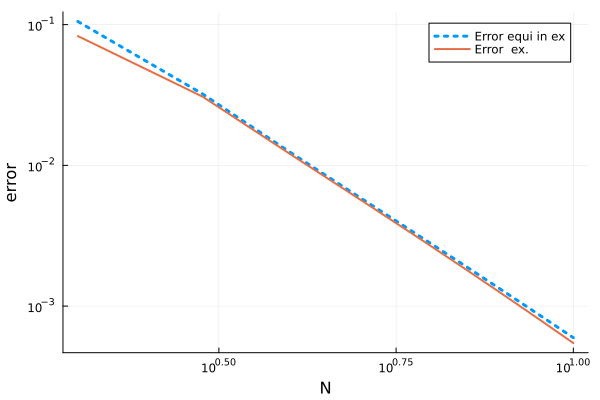}	
	\end{subfigure}
		\caption{Errors for the SBP operators with and without \eqref{eqparamP}}
	\label{fig:errors_constants}
\end{figure}

\subsubsection{Arbitrary grids}\label{no_name}

Up to now, we have exclusively utilized equidistant points to construct the SBP operators. 
We now explore using randomly selected point sets to construct operators for the example above. 
To this end, we generated $N=10$ and $N=20$ random grid points using the $\operatorname{rand}$ function, while fixing the boundary points $-1$ and $1$:
\begin{equation}\label{random}
\begin{aligned}
	x_{10} = [ &- 1, -0.62, -0.56, -0.53, -0.49, -0.36, 0.06, 0.20, 0.75, 1], \\
	x_{20} = [ &-1, -0.95, -0.86, -0.79, -0.72, -0.62, -0.56, -0.34, -0.06, \\
	& 0.06, 0.08, 0.12, 0.29, 0.53, 0.65, 0.74, 0.78, 0.83, 0.87, 1].
\end{aligned}
\end{equation}
In \eqref{random}, we have rounded the reported numbers to two decimal places. 
In \Cref{fig_error_random_1}, we compare results using SBP operators constructed with equidistant points (solid orange line) against those constructed with our randomly selected points $x_{10}$ and $x_{20}$ (blue dotted line). 
We found that SBP operators using random points produced  fourth-order accurate schemes. 
Moreover, the scheme using the randomly selected $x_{10}$ points even outperformed the classical equidistant point selection, as shown in \Cref{fig_error_random_1} (a). 
In contrast, the scheme using $x_{20}$ performed worse than that using equidistant points. 
This experiment highlights that the performance of schemes greatly depends on the choice of points. 
Further investigation into optimal selections of points relative to specific problems will be a focus of future research. 
We also emphasize that the choice of $x_N$ and the underlying approximation space significantly affects the optimization process, potentially 
hindering convergence.

\begin{figure}
	\centering
	\begin{subfigure}{0.45\textwidth}
		\includegraphics[width=\textwidth]{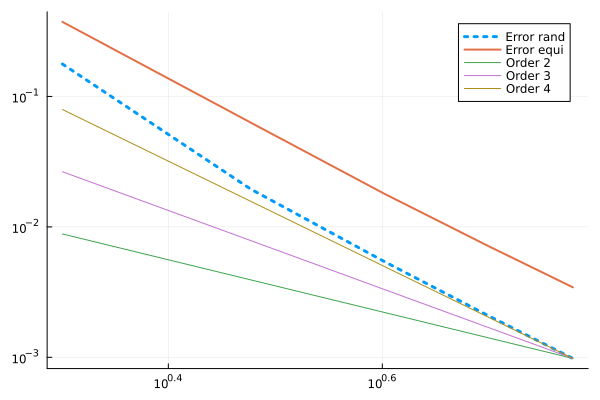}
		\caption{$N=10$ points}
	\end{subfigure}
		\begin{subfigure}{0.45\textwidth}
			\includegraphics[width=\textwidth]{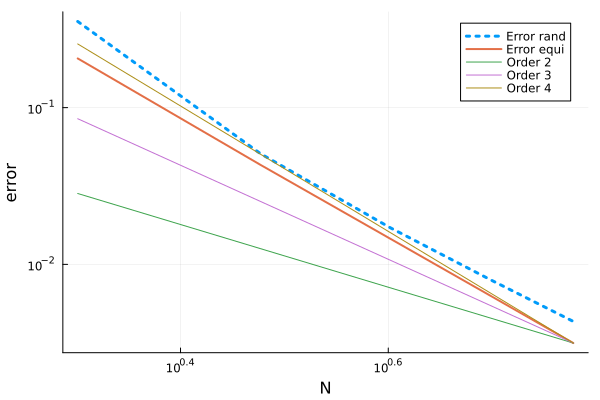}
		\caption{$N=20$ points}
	\end{subfigure}	
	\caption{Errors for SBP operators on equidistant and random points}
	\label{fig_error_random_1}
\end{figure}

\begin{remark}
	We also employed Gauss--Lobatto points in our novel construction procedure to develop SBP operators that maximize accuracy based on the Gauss--Lobatto quadrature rule. 
	In our numerical tests, which involved using up to 8 Gauss--Lobatto points, we consistently reproduced the classical SBP operators based on Gauss--Lobatto points.
\end{remark}

\subsubsection{Comparison to general FSBP operators}

Next, we evaluate the impact of omitting the condition for exact quadrature \eqref{eq_constant} from our optimization process and assess its performance across general function spaces. The function space considered here\footnote{
Other function spaces  produced comparable results, though they are not presented here.} is
\begin{equation}
	\mathcal{F} = \{x, \e^{- (1+x)^2/9}, \e^{- (3/5+x)^2/9}  \}.
\end{equation}
We considered the previously mentioned test case \eqref{stuff}, calculating the FSBP operators twice using $N=5$ equidistant points: 
Once including the additional requirement \eqref{eqparamP} and once without it. 
In the latter case, the integral of the constant was $2.000175$, deviating from the expected value of $2$---the interval's length. 
Error plots for both scenarios are shown in \Cref{ferror}. 
Similar accuracy was obtained for both methods.

\begin{figure}
\begin{center}
	\begin{subfigure}{0.49\textwidth}
		\includegraphics[width=\textwidth]{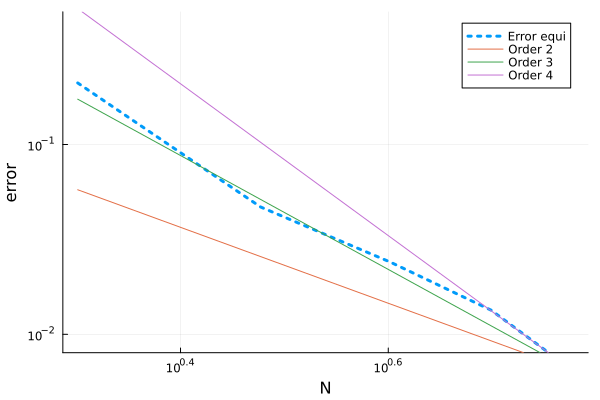}
		\caption{Convergence Analysis exact for constants}
	\end{subfigure}
		\begin{subfigure}{0.49\textwidth}
		\includegraphics[width=\textwidth]{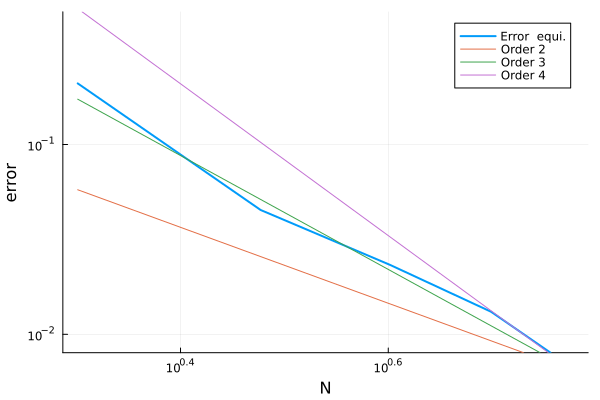}
		\caption{Convergence Analysis in-exact for constants}
	\end{subfigure}
	\caption{Convergence Analysis for our new operators. The errors of the solution converge in the energy norm induced by the quadrature in $P$. The $y$ axis denotes the error, and the $x$ axis refers to the refinement of the grid.  }
	\label{ferror}
	\end{center}
\end{figure}

We also explored using non-equidistant points, specifically Chebyshev-Lobatto nodes, while maintaining the same test setup. The results, depicted in \Cref{ferror_2}, indicate that we still obtain effective operators, though error levels were notably reduced by one order of magnitude.
Future research will delve deeper into the selection of points relative to various basis functions and its impact on accuracy.

\begin{figure}
\begin{center}
		\begin{subfigure}{0.49\textwidth}
		\includegraphics[width=\textwidth]{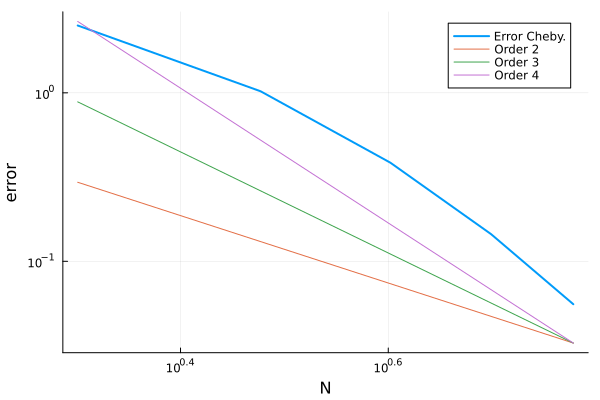}
		\caption{Non-equidistance point selection (Chebyshev-Lobatto)}
	\end{subfigure}
		\begin{subfigure}{0.49\textwidth}
		\includegraphics[width=\textwidth]{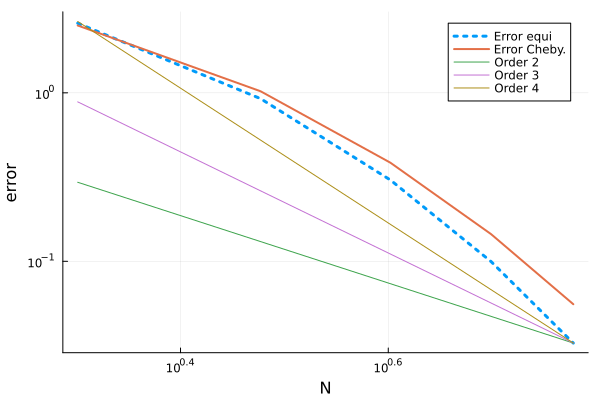}
		\caption{Comp. Chebyshev-Lobatto and equidistant points}
	\end{subfigure}
	\caption{Errors using non-equidistant points (Chebyshev-Lobatto)}
	\label{ferror_2}
	\end{center}
\end{figure}

\subsubsection{New and old construction procedures}

In a last test, we compare the old and new construction procedure with each other. 
Here, our focus is on a basic example, employing the setup outlined in \cite[Section 6.2]{GNO2023FSBP}. 
We consider the exponential approximation space $\Span \{ 1, x, e^{x} \}$ on the interval $[0,1]$.
When utilizing the old construction procedure, a quadrature formula is necessary.  With a least-squares approach, we require $N=5$ equidistant points. The resulting operators $P$ and $Q$ are:
\[
	P= \diag{(0.076, 0.3621,  0.1245, 0.3609, 0.0766)},
\]
\begin{equation*}
Q=\left[
\begin{array}{ccccc}
-0.5 & 0.653 & -0.0350 & -0.1927 & 0.0748 \\
-0.653 & 0 & 0.3198 & 0.5238 & -0.1907 \\
0.03503 & -0.3198 & 0 & 0.3215 & -0.0367 \\
0.1927 & -0.5238 & -0.3215 & 0 & 0.6526 \\
-0.0748 & 0.1907 & 0.0367 & -0.6526 & 0.5 \\
\end{array}
\right].
\end{equation*} 
Here, the numbers have been rounded to the fourth decimal place. 
Employing our new construction procedure for the same grid points yields precisely the same operators up to machine precision 
(approximately $10^{-14}$). Remarkably, the new procedure can also generate such FSBP operator using only four equidistant grid points. They are given by: 

\[
	P= \diag{(0.1413, 0.3301,  0.4159, 0.1127)},
\]
\begin{equation*}
Q=\left[
\begin{array}{cccc}
-0.5 & 0.5097 & 0.0568 & -0.0665 \\
-0.5097 & 0 & 0.5386 & -0.029 \\
-0.0568 & -0.5386 & 0 & 0.5955 \\
0.0665 & 0.029 & -0.5955 & 0.5 \\
\end{array}\right],
\end{equation*} 
 where the numbers have again been rounded to the fourth decimal place.
This example serves as a demonstration of how the new construction approach leads to smaller operators. However, the true extent of the capabilities of the new procedure is showcased in the subsequent section.

\subsection{The Schrödinger equation}

\begin{figure}
	\centering
	\begin{subfigure}{0.48 \textwidth}
		\includegraphics[width=\textwidth]{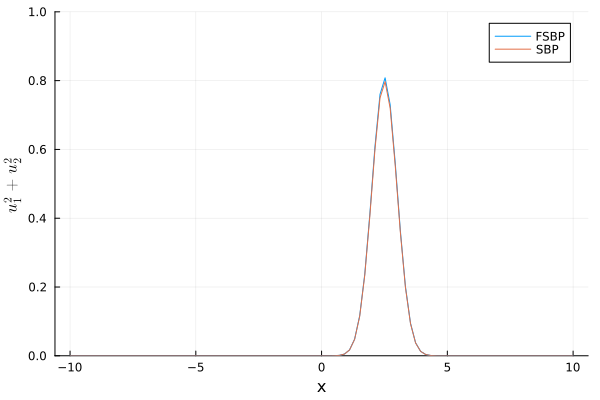}
		\caption{Solution at $t = 0$}
	\end{subfigure}
	\begin{subfigure}{0.48 \textwidth}
		\includegraphics[width=\textwidth]{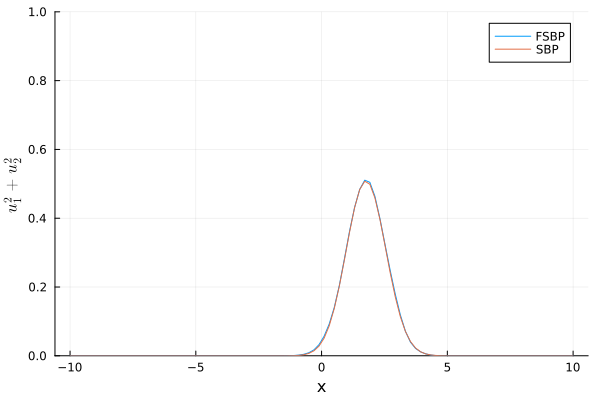}
		\caption{Solution at $t = \frac {\pi}{8}$}
	\end{subfigure}
		\begin{subfigure}{0.48 \textwidth}
		\includegraphics[width=\textwidth]{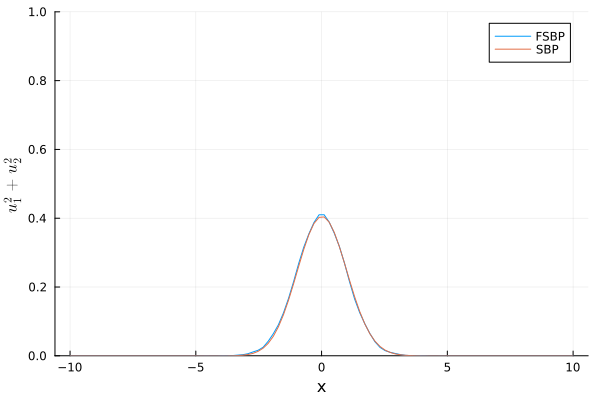}
		\caption{Solution at $t = \frac {\pi}{4}$}
	\end{subfigure}
		\begin{subfigure}{0.48 \textwidth}
		\includegraphics[width=\textwidth]{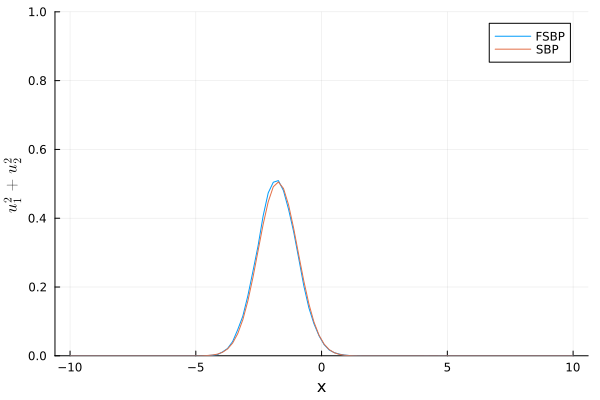}
		\caption{Solution at $t = \frac {3\pi}{8}$}
	\end{subfigure}
		\begin{subfigure}{0.48 \textwidth}
		\includegraphics[width=\textwidth]{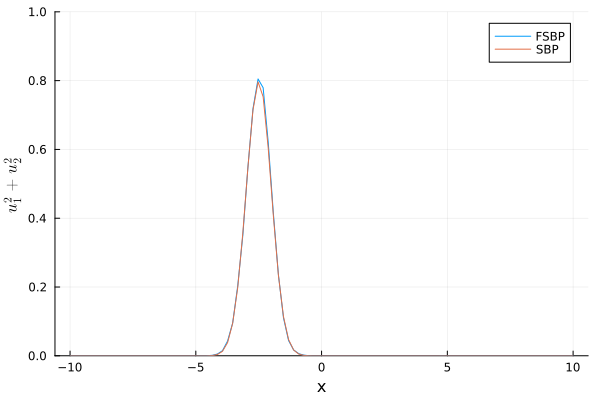}
		\caption{Solution at $t = \frac {\pi}{2}$}
	\end{subfigure}
		\begin{subfigure}{0.48 \textwidth}
		\includegraphics[width=\textwidth]{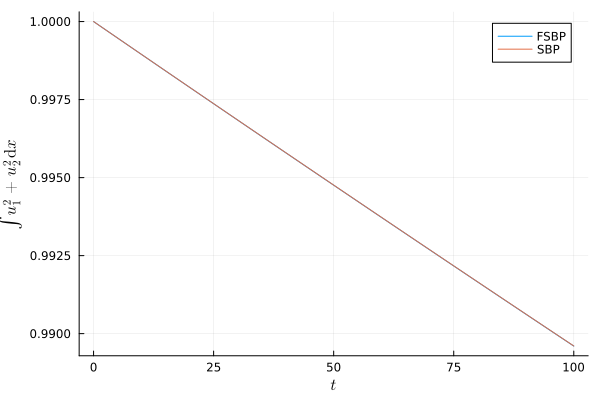}
		\caption{Squared $L^2$ norm of the wave function, equivalent to the integral of the probability distribution, over time.}
	\end{subfigure}
	\caption{Tests on Schrödingers' equation. All printed graphs are approximate numerical solutions. The solution dubbed ``FSBP" was calculated by an FSBP operator exact on the space $\mathcal{G}$. In contrast, the solution ``SBP" was calculated by the classical SBP operator of order $4$ presented in the Subsection \ref{subsec_polynomial} on the same node set.}
	\label{fig:SchroedingerSol}
\end{figure}

\begin{figure}
	\centering
	\includegraphics[width=0.75\textwidth]{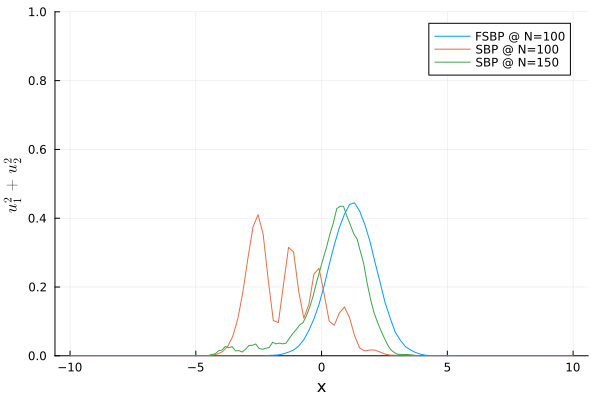}
	\caption{Solution at $t = 100.0$. During long simulation times, the higher accuracy of the FSBP operator for the given task becomes obvious. While the FSBP solution is still consistent with our expectation, a swinging bump, the SBP solution is drastically deformed. Only enlarging the number of nodes allows the SBP scheme to capture the essential shape of the solution. This is unfortunate as the time-step reduces quadratically with the mesh size for a second-order equation like Schrödingers' equation.}
	\label{fig:SchrLongTime}
\end{figure}

As a more advanced test, we used our operators to solve Schrödinger's equation, which is a perturbed version of the quantum harmonic oscillator \cite{LL1965QuantumMechanics}
\begin{equation}
	\der{\psi}{t} = -\mathrm{i} \mathcal{H} \psi, \quad \mathcal H = -\der [2] {~}{x} + V(x), \quad V(x) = x^2, \quad \psi(x_l, t) = 0 = \psi(x_r, t) .
	\end{equation}
	The Hermite polynomials \cite{NSU1991ONP}
	\begin{equation}
		H_n(x) = (-1)^n \e^{x^2} \left( \derd{~}{x} \right)^n \e^{-x^2}
	\end{equation}
	can be multiplied by a Gaussian pulse to find the Hermite functions
	\begin{equation}
		\psi_n(x) = \e^{- \frac {x^2} 2} H_n(x).
	\end{equation}
	These are the eigenfunctions of the Hamilton differential operator $\mathcal{H}$ used in the Schrödinger equation for the harmonic oscillator \cite{LL1965QuantumMechanics}. 
	We will solve the equations governing the harmonic oscillator on a finite domain $\Omega = [x_l, x_r]$ with the boundary conditions $\psi(x_l) = 0 = \psi(x_r)$, corresponding to an infinitely high potential well outside of the domain that restricts the particle to the domain. Our problem could, therefore, be considered a perturbation of the original harmonic oscillator, and we will use a combination of polynomials and Hermite functions as the function space 
	\begin{equation}
		\mathcal{G} = \{1, x, \phi_0(x), \phi_1(x), \ldots, \phi_{10}(x) \}
	\end{equation} 
	to design a new FSBP operator on the interval $\Omega = [x_L, x_R] = [-10, 10]$. The operator was constructed using $N = 100$ nodes. Here, the advantage of the new construction procedure becomes obvious, as the space $(\mathcal{G}^2)'$, needed for the previous construction procedure, is not needed. We avoid explicitly calculating the  $\approx 50$ basis elements of $(\mathcal{G}^2)'$. 
	To solve Schrödinger's equation using SBP operators, we begin by rewriting the solution into a vector-valued real function $u$ 
	by splitting the wave function $\psi(x, t) = u_1(x, t) + \mathrm{i} u_2(x, t)$ into a real and imaginary part and inserting that into the governing equation
		\begin{equation}
		\der~  t   \begin{pmatrix}
		      u_1(x,t)\\
		      u_2(x,t)
		      \end{pmatrix} = \begin{pmatrix}
			0 &  V(x) -\der [2]~ x \\
			-(V(x)-  \der [2]~ x) &0
		      \end{pmatrix}  
		      \begin{pmatrix}
		      u_1(x,t)\\
		      u_2(x,t)
		      \end{pmatrix}
		      ,  \quad u(x_L, \cdot) = 0 = u(x_R, \cdot).
	\end{equation}
	
	The squared absolute value $\abs{\psi}^2 = u_1^2(x, t) + u_2^2(x, t)$ of the complex-valued wave function $\psi$ is the probability of a particle to be at the position $x$ at time $t$. Conservation has to hold for the integral of this absolute value since
	\begin{equation}
		\begin{aligned}
		\derd ~ t \int u_1^2 + u_2^2 \intd x &= 2 \int u_1 \der {u_1} {t} + u_2 \der{u_2}{t} \intd x \\
		&= 2 \underbrace{\int u_1 V(x) u_2 - u_2 V(x) u_1 \intd x}_{ = 0} + 2 \int -u_1 \der [2] {u_2}{x} + u_2 \der [2] {u_1}{x} \intd x \\
		&= \underbrace{2\int \der {u_1} x \der {u_2} x - \der {u_2} {x} \der{u_1}{x} \intd x}_{= 0} + 2\left [- u_1 \der {u_2}{x} + u_2 \der {u_1}{x} \right ]_{x_L}^{x_R} = 0.
		\end{aligned}
	\end{equation}
	The boundary terms vanish as the wave function vanishes at the domain's boundary.  
	Our semi-discretization is given by

			\begin{equation}
		\derd~  t   \begin{pmatrix}
		      u_1(x,t)\\
		      u_2(x,t)
		      \end{pmatrix} = \begin{pmatrix}
			0 &  V -D_x^2 \\
			-(V-  D_x^2) &0
		      \end{pmatrix}  
		      \begin{pmatrix}
		      u_1(x,t)\\
		      u_2(x,t)
		      \end{pmatrix}
		      + 
		       \begin{pmatrix}
			0 &  P^{-1} B D_x \\
			-P^{-1} B D_x &0
		      \end{pmatrix}  
		        \begin{pmatrix}
		      u_1(x,t)\\
		      u_2(x,t)
		      \end{pmatrix}
	\end{equation}

	This semi-discretization yields the same estimate in the discrete setting due to the SBP property, as can be seen via
	\begin{equation}
		\begin{aligned}
		&\derd ~ t \left ( u_1^T P u_1 + u_2^T P u_2 \right )\\
		 = &2 u_1^T P \derd {u_1} t + 2 u_2^T P \derd {u_2} t  \\
					=& 2 u_1^T P V u_2 - 2 u_2^T P V u_1  - 2 u_1^T P D_x^2 u_2 + 2 u_2 PD_x^2 u_1 + 2u_1^T BD_x u_2 -2 u_2^T BD_x u_1 \\
					=& 2 u_1^T D_x^T P D_x u_2 - 2 u_2^T D_x^T P D_x u_1 - 2 u_1^T B D_x u_2 + 2u_2^T B D_x u_1 + 2u_1^T BD_x u_2 - 2u_2^T BD_x u_1\\
					=& 0.
			\end{aligned}
	\end{equation}
	using the SBP property of the second derivative $PD_x^2 = - D_x^T P D_x + B D_x$. \\
	To test our scheme, we used
	\begin{equation}
		\phi(x, 0) = \e^{(x- 2.5)^2}
	\end{equation}
	as an initial wave function, i.e., a particle smeared around $x_c = 2.5$. Note that this wave function does not correspond to $H_0$. The time evolution of this initial condition over a quarter oscillation can be seen in figure \cref{fig:SchroedingerSol}. As expected, the probability of measuring the particle swings to the left, and when one looks further, it swings back to the right.
	The integral of the squared absolute value of the wave function over time can be seen in Figure \ref{fig:SchroedingerSol}. One can see a slight decrease in the norm of $u$, i.e., the stability of the computation. The superiority of the FSBP operator is obvious for long integration times. The FSBP-based numerical solution in Figure \ref{fig:SchrLongTime} predicts the moving bump at $t = 100$, while the SBP solution is drastically deformed. Under grid refinements, the SBP solution converges to the FSBP solution but at a significantly higher cost. 
	 
\section{Summary}
\label{se_conclustion}

We have developed a novel, optimization-based construction procedure for FSBP operators. 
To this end, we formulated these operators properties as an optimization problem. 
Our method efficiently determines the quadrature (norm matrix) and the differentiation matrix simultaneously during optimization. 
We ensured numerical stability by employing discrete Sobolev orthogonal functions in our approach. 
Furthermore, our method facilitates the use of substantially more nodes---which can be chosen arbitrarily---and accommodates higher-dimensional function spaces $\mathcal{F}$. 
This work enhances the flexibility and applicability of FSBP operators and paves the way towards more efficient construction and applications of FSBP operators in future works. 
Such works include the application of FSBP operators to RBF methods on complex multi-dimensional domains and methods based on neural network approximations. 

\appendix

\section*{Acknowledgements} 

JG was supported by the ONR MURI grant \#N00014-20-1-2595. 
JN was supported by  Vetenskapsr\r{a}det Sweden grant 2021-05484 VR and University of Johannesburg. 
P\"O was supported by the Deutsche Forschungsgemeinschaft (DFG) within SPP 2410, project  525866748  and under the personal grant 520756621, and the Gutenberg Research College, JGU Mainz.\\
The authors thank Simon-Christian Klein for his input on an early version of the paper.

\small
\bibliographystyle{siamplain}
\bibliography{references}

\end{document}